\documentclass[12pt,reqno]{amsart}
\usepackage{amsmath,amsfonts,amsthm,amsopn,amssymb}
\usepackage{cite,marginnote}
\pretolerance=10000


\usepackage{color,enumitem,graphicx}
\usepackage[colorlinks=true,urlcolor=blue,
citecolor=red,linkcolor=blue,linktocpage,pdfpagelabels,
bookmarksnumbered,bookmarksopen]{hyperref}
\usepackage[english]{babel}

\usepackage[left=2.9cm,right=2.9cm,top=2.8cm,bottom=2.8cm]{geometry}
\usepackage[hyperpageref]{backref}




\numberwithin{equation}{section}

\pretolerance=10000
\makeindex
\newtheorem{theorem}{Theorem}[section]
\newtheorem{lemma}{Lemma}[section]
\newtheorem{corollary}{Corollary}[section]
\newtheorem{proposition}{Proposition}[section]
\newtheorem{remark}{Remark}[section]
\newtheorem{example}{Example}[section]
\newtheorem{definition}{Definition}[section]

\newcommand{\ud}{\mathrm{d}}
\newcommand{\RN}{\mathbb R^N}

\newcommand{\iy}{\infty}

\newcommand{\s}{\section}
\newcommand{\dd}{\delta}
\newcommand{\DD}{\Delta}
\newcommand{\g}{\gamma}
\newcommand{\G}{\Gamma}
\newcommand{\na}{\nabla}

\newcommand{\pa}{\partial}

\newcommand{\R}{\mathbb R}
\newcommand{\al}{\alpha}

\newcommand{\ti}{\tilde}

\newcommand{\re}[1]{\eqref{#1}}

\newcommand{\rg}{\rightarrow}

\newcommand{\e}{\varepsilon}
\newcommand{\vp}{\varphi}

\newcommand{\lab}{\label}
\newcommand{\bt}{\begin{theorem}}
\newcommand{\et}{\end{theorem}}
\newcommand{\bl}{\begin{lemma}}
\newcommand{\el}{\end{lemma}}
\newcommand{\bd}{\begin{definition}}
\newcommand{\ed}{\end{definition}}
\newcommand{\bc}{\begin{corollary}}
\newcommand{\ec}{\end{corollary}}
\newcommand{\bp}{\begin{proof}}
\newcommand{\ep}{\end{proof}}
\newcommand{\bx}{\begin{example}}
\newcommand{\ex}{\end{example}}
\newcommand{\bi}{\begin{exercise}}
\newcommand{\ei}{\end{exercise}}
\newcommand{\bo}{\begin{proposition}}
\newcommand{\eo}{\end{proposition}}
\newcommand{\br}{\begin{remark}}
\newcommand{\er}{\end{remark}}
\newcommand{\be}{\begin{equation}}
\newcommand{\ee}{\end{equation}}
\newcommand{\ba}{\begin{align}}
\newcommand{\ea}{\end{align}}
\newcommand{\bn}{\begin{enumerate}}
\newcommand{\en}{\end{enumerate}}
\newcommand{\bg}{\begin{align*}}
\newcommand{\bcs}{\begin{cases}}
\newcommand{\ecs}{\end{cases}}

\newcommand{\vr}{\varepsilon}

\newcommand{\bean}{\begin{eqnarray*}}
\newcommand{\eean}{\end{eqnarray*}}


\title[Multi-peak solutions]{Multi-peak solutions for nonlinear Choquard equation with a general nonlinearity}

\author[M. B.\ Yang]{Minbo Yang}
\author[J. J.\ Zhang]{Jianjun Zhang}
\author[Y. M.\ Zhang]{Yimin Zhang}

\address[M. B.\ Yang]{\newline\indent Department of Mathematics
\newline\indent
Zhejiang Normal University
\newline\indent
Jinhua 321004, PR China}
\email{\href{mailto:mbyang@zjnu.edu.cn}{mbyang@zjnu.edu.cn}}

\address[J. J.\ Zhang]{\newline\indent College of Mathematics and Statistics
\newline\indent
Chongqing Jiaotong University
\newline\indent
Chongqing 400074, PR China}
\email{\href{mailto:zhangjianjun09@tsinghua.org.cn}{zhangjianjun09@tsinghua.org.cn}}

\address[Y. M.\ Zhang]{\newline\indent Wuhan Institute of Physics and Mathematics
\newline\indent
Chinese Academy of Sciences
\newline\indent
Wuhan 430071, China}
\email{\href{mailto:zhangym802$@$126.com}{zhangym802$@$126.com}}

\thanks{J.J. Zhang is the corresponding author and was partially supported by the Science Foundation of Chongqing Jiaotong University(15JDKJC-B033). M.B. Yang was partially supported by NSFC (11101374, 11271331, 11571317) and ZJNSF(LY15A010010). Y.M. Zhang was partially supported by NSFC (11271360, 11471330).}

\subjclass[2000]{35B25, 35B33, 35J61}
\keywords{multi-peak solutions, Choquard equation, critical growth}

\begin{document}

\begin{abstract}
In this paper, we study a class of nonlinear Choquard type equations involving a general nonlinearity. By using the method of penalization argument, we show that there exists a family of solutions having multiple concentration regions which concentrate at the minimum points of the potential $V$. Moreover,
the {\it monotonicity} of $f(s)/s$ and the so-called {\it Ambrosetti-Rabinowitz} condition are not required.
\end{abstract}

\maketitle

\s{Introduction}
\renewcommand{\theequation}{1.\arabic{equation}}

\noindent This paper is concerned with the following nonlinear Choquard equation
\begin{equation}\label{q1}
-\e^2\DD v+V(x)v=\e^{-\alpha}(I_\al\ast F(v))f(v),\,\, v\in H^1(\RN),
\end{equation}
where $N\geq 3$, $\alpha\in(0,N)$, $F$ is the prime function of $f$ and $I_\al$ is the Riesz potential defined for every $x\in\RN\setminus\{0\}$ by
$$
I_\al(x):=\frac{\G((N-\al)/2)}{\G(\al/2)\pi^{N/2}2^\al|x|^{N-\al}}.
$$
In the sequel, we assume that
the potential function $V$ satisfies the following conditions:
\begin{itemize}

\item [(V1)] $V\in C(\RN,\R)$ and $\inf_{x\in\RN}V(x)=1$.

\item [(V2)] There are bounded disjoint open sets $O^i, i=1,2,\cdots,k$, such that for any $i\in\{1,2,\cdots,k\}$, $$0<m_i\equiv\inf_{x\in O^i}V(x)<\min_{x\in \partial O^i}V(x),$$

\end{itemize}
and $f\in C(\R^+,\R^+)$ satisfies
\begin{itemize}
\item [(F1)] $\lim_{t\rg 0^+}f(t)/t=0$;
\item [(F2)] $\lim_{t\rg +\infty}f(t)/t^{\frac{\al+2}{N-2}}=0$;
\item [(F3)] there exists $s_0>0$ such that $F(s_0)>0$.
\end{itemize}
For any $i\in\{1,2,\cdots,k\}$, let $$\mathcal{M}^i\equiv\{x\in O^i: V(x)=m_i\}.$$

\noindent The main theorem of this paper reads as
\bt\lab{Theorem 1} Suppose that $\al\in((N-4)_+,N)$, $(V1)$-$(V2)$ and $(F1)$-$(F3)$.
Then, for sufficiently small $\e>0$, \re{q1} admits a positive solution $v_{\e}$, which satisfies
\begin{itemize}
\item [(i)] there exist $k$ local maximum points $x_\e^i\in O^i$of $v_\e$ such that $$\lim_{\e\rg 0}\max_{1\le i\le k}dist(x_\e^i,\mathcal{M}^i)=0,$$ and $w_\e(x)\equiv v_\e(\e x+x_\e^i)$ converges (up to a subsequence) uniformly to a least energy solution of
\be\lab{q2} -\DD u+m_iu=(I_\al\ast F(u))f(u),\ \ \ \ u\in H^1(\RN); \ee
\item [(ii)] $v_\e(x)\le C\exp(-\frac{c}{\e}\min\limits_{1\le i\le k}|x-x_\e^i|)$ for some $c,C>0$.
\end{itemize}
\et

Our motivation for the study of such a problem goes back at least to the pioneering work of Floer and Weinstein \cite{F-W} (see also \cite{Oh}) concerning the Schr\"odinger equation
\be\lab{sch}
-\e^2\DD u+V(x)u=f(u),\,\,\ x\in\RN.
\ee
By means of a Lyapunov-Schmidt reduction approach, these authors constructed sing-peak or multi-peak solutions of \re{sch} concentrating around any given non-degenerate critical points of $V$ as $\e\rg0$. For $\e>0$ sufficiently small, these standing waves are
referred to as semi-classical states, which describe the transition from quantum mechanics to classical mechanics. For the
detailed physical background, we refer to \cite{Oh} and references therein. In \cite{F-W,Oh}, their arguments are
based on a Lyapunov-Schmidt reduction in which a non-degenerate condition plays a crucial role. Without such a non-degenerate condition, by using the mountain pass argument, Rabinowitz \cite{Rab} proved the existence of positive solutions of \re{sch} for small $\e>0$ provided the following global potential well condition
$$
\liminf_{|x|\rg\iy}V(x)>\inf_{\RN}V(x)
$$
holds. Subsequently, by virtue of a penalization approach, del Pino and Felmer
\cite{DF} established the existence of a single-peak solution to \re{sch} which
concentrates around local minimum points of $V$. Some related results can be found in \cite{WX,Ni-Wei,Felmer2,Pino,DPR,Alves1} and the references therein.
In the works above, the nonlinearity $f$ satisfies the monotonicity condition
\be\lab{N}\tag{N}
\hbox{$f(s)/|s|$ is strictly increasing for $s\not=0$}
\ee
or the well-known $Ambrosetti$-$Rabinowitz\ condition$
\be\lab{AR}\tag{AR}
\hbox{$0<\mu\int_0^sf(t)\,\ud t\le sf(s)$ for any $t\not=0$ and some $\mu>2$.}
\ee
To attack the existence of positive solutions to \re{sch} without \re{N} and \re{AR}, by introducing a new penalization approach, Byeon and Jeanjean \cite{byeon} constructed a spike solution near local minimal points of $V$ under a almost optimal hypotheses:
\begin{itemize}

\item [$({BL1})$] $f\in C(\R, \R)$ and $\lim\limits_{t\to 0}f(t)/t=0;$

\item [$({BL2})$]  there exists $p\in (1, (N+2)/(N-2))$ such that $\limsup\limits_{t\to \infty}f(t)/t^p<\infty;$

\item [$({BL3})$]  there exists $T>0$ such that $mT^2/2<F(T) :\equiv\int_0^Tf(t)dt.$

\end{itemize}
$(BL1)$-$(BL3)$ are referred to as the Berestycki-Lions\ conditions, which were
firstly proposed by a celebrated paper \cite{Lions}. We refer the reader to \cite{BT1,BT2,byeon4} and the references therein for the development on this subject.  \vskip0.1in

Taking $u(x)=v(\e x)$ and $V_{\e}(x)=V(\e x)$, then \eqref{q1} is
equivalent to the following problem
\begin{eqnarray}\label{q2}
-\DD u+V_{\e}(x)u=(I_\al\ast F(u))f(u),\ \ u\in H^1(\RN).
\end{eqnarray}
Obviously, the term $(I_\al\ast F(v))f(v)$ is nonlocal. Equation \re{q2} can be seen as a special case of the generalized nonlocal
Schr\"odinger equation
\begin{equation}\label{q1r}
-\DD u+V_{\e}(x)u=(K(x)\ast F(u))f(u),\ \ u\in H^1(\RN).
\end{equation}
From the view of physical background, $K(x)$ is called as a response function which possesses the information on the mutual interaction between the bosons. In general, the following equation for $a>0$ is considered as the limiting
equation of \eqref{q2}
\begin{eqnarray}\label{lb1}
-\Delta u+au=(I_\al\ast F(u))f(u),\ \ \ u\in H^1(\RN).
\end{eqnarray}
For $N=3$, $\al=2$ and $f(s)=s$, \eqref{q1} and \eqref{lb1} reduce to
\begin{equation}\label{main equation5}
-\e^2\DD v+V(x)v=\e^{-2}(I_2\ast v^2)v/2,\ \ x\in \R^3
\end{equation}
and
\begin{eqnarray}\label{limit equation2}
-\Delta u+au=(I_2\ast u^2)u/2, \ x\in \R^3.
\end{eqnarray}
Equation \eqref{limit equation2} is commonly named as the stationary
Choquard equation. In 1976, during the symposium on
Coulomb systems at Lausanne, Choquard proposed this type of equations as an approximation to Hartree-Fock
theory for a one component plasma\cite{LS}. It arises in multiple
particle systems\cite{Gross, LS}, quantum
mechanics\cite{Penrose1,Penrose2,Penrose3} and laser beams,
etc. In the recent years, There has been a considerable attention to be paid on investigating the Choquard equation. In the pioneering works \cite{Lieb1}, Lieb investigated the existence and uniqueness of positive solutions to equation \eqref{limit equation2}. Subsequently, Lions\cite{Lions2,Lions3} obtained the existence and multiplicity results for \eqref{limit equation2} via the critical point theory. In \cite{MZ}, Ma and Zhao studied the classification of all positive solutions to the nonlinear Choquard problem
\begin{eqnarray}\label{limit equation3}
-\Delta u+u=(|x|^{-\alpha}\ast |u|^{p}) |u|^{p-2}u, \ u\in H^1(\R^{N}),
\end{eqnarray}
where $\al\in(0,N)$ and $p\in[2,(2N-\al)/(N-2))$. Due to the present of the nonlocal term, the standard
method of moving planes cannot be used directly. So the classification of positive solutions to \re{limit equation3}(even for $p=2$)
had remained as an longstanding open problem. By using the integral form of the
method of moving planes introduced by Chen et al. \cite{Chen}, Ma and Zhao\cite{MZ} solved this open problem. Precisely, they proved that up to translations,
positive solutions of equation \eqref{limit equation3} are radially symmetric and monotone decreasing, under some assumption on $\alpha$, $p$ and $N$. In \cite{MV3}, Moroz and van Schaftingen eliminated this restriction and established an optimal range of parameters for the existence of a positive
ground state solution of \eqref{limit equation3}. Moreover, they proved that all positive ground state solutoions of \eqref{limit equation3} are
radially symmetric and monotone decaying about some point. Later, in the spirit of Berestycki and Lions, Moroz and van Schaftingen\cite{MV1} gave a almost necessary conditions on the nonlinearity $f$ for the existence of ground state solutions of \eqref{q2}. The symmetry of slutions was considered in \cite{MV1} as well.
\vskip0.1in
In the present paper, we are interested in semiclassical state solutions of \re{q1}. For the special case \eqref{main equation5}, there have been many results
on this subject( see \cite{CSS, MN, Nolasco, Secchi, WW} and the references therein). By using a Lyapunov-Schmidt
reduction argument, Wei and Winter\cite{WW} proved the existence
of multibump solutions of \re{main equation5} concentrating at local minima, local maxima
or non-degenerate critical points of $V$ provided $\inf V>0$. Subsequently, Secchi\cite{Secchi} studied the case of the
potential $V>0$ and satisfying $\liminf_{|x|\rg\iy}V(x)|x|^\g>0$ for some $\g\in[0,1)$. By a perturbation technique, they obtained the existence of
positive bound state solution concentrating at local minimum (or maximum) points of V when $\e\rightarrow 0$.
Moroz and Van Schaftingen \cite{MV2} considered the semiclassical states of the Choquard equation \re{q1} with $f(s)=|s|^{p-2}s$,
$p\in[2,(N+\al)/(N-2)_+)$. By introducing a novel nonlocal penalization technique, the authors proved that \re{q1} has a family of solutions
concentrating at the local minimum of $V$. Moreover, in \cite{MV2} the potential $V$ maybe vanishes at infinity, and the assumptions on the decay of $V$
and the admissible range for $p\ge2$ are optimal. In \cite{YD}, Yang and Ding considered the following equation
 \begin{equation}\label{1.8}
 -\varepsilon^2\Delta u +V(x)u  =\Big[\frac{1}{|x|^{\mu}}\ast u^p\Big]u^{p-1}, \quad \mbox{in} \quad \R^3.
\end{equation}
By using the variational methods, for suitable parameters $p, \mu$, the authors  obtained the existence of solutions of (\ref{1.8}).
By the penalization method in \cite{DF}, Alves and Yang \cite{alves1} considered the concentration behavior of solutions to the following generalized quasilinear Choquard equation
$$
-\e^p\DD_p v+V(x)|v|^{p-2}v=\e^{\mu-N}\left(\int_{\RN}\frac{Q(y)F(v(y))}{|x-y|^\mu}\,\ud y\right)Q(x)f(v),\,\, x\in\RN,
$$
where $\DD_p$ is the $p$-Laplacian operator, $p\in(1,N)$ and $\mu\in(0,N)$. For the related results, we refer to \cite{alves,CSS,Sun} and the references therein.
\vskip0.1in
To sum  up, in all the works mentioned above, the authors only considered the Choquard equation \re{q1} with a power type nonlinearity or a general nonlinearity
satisfying
some sort of monotonicity condition or Ambrosetti-Rabinowtiz type condition. Similar to \cite{byeon} for the local problem \re{sch}, it seems natural to ask

{\it Does the similar concentration phenomenon occur for the Choquard equation \re{q1} under very mild assumptions on $f$ in the spirit of Berestycki and Lions?}

\noindent In the present paper, we will give an affirmative answer to this question. In particular, the monotonicity condition and Ambrosetti-Rabinowtiz condition are not
required.
\vskip0.1in
The spirit of this paper is somewhat akin to \cite{bj,byeon}. The penalization argument is used to prove Theorem \ref{Theorem 1}.
This method is widely used by many authors. The penalization functional we need was first introduced by Byeon and Wang in \cite{BW}.


\s{Proof of Theorem \ref{Theorem 1}}

\renewcommand{\theequation}{2.\arabic{equation}}

\noindent  In this section, we will use the framework of Byeon and Jeanjean \cite{bj}(see also \cite{byeon}) to prove our main result.
\subsection{The limit problem}
We define an energy functional for the limiting problem \re{lb1} by
$$
L_a(u)=\frac{1}{2}\int_{\RN}|\na u|^2+au^2-\frac{1}{2}\int_{\RN} (I_\al\ast F(u))F(u),\ \ u\in H^1(\RN).
$$
Let $E_a$ be the least energy of \re{lb1} and $S_a$ be the set of least energy solutions $U$ of \re{lb1} satisfying $U(0)=\max_{x\in \RN}U(x)$, the following property of  $S_a$ was proved in \cite{MV1}.

\bo\lab{prop2.1}{\rm\cite{MV1}}\ Assume that $f$ satisfies $(F1)$-$(F3)$, then
\begin{itemize}
\item [$(i)$] $S_a\not=\emptyset$ and $S_a$ is compact in $H^1(\RN)$.
\item [$(ii)$] $E_a$ coincides with the mountain pass value.
\item [$(iii)$] For any $U\in S_a$, $U\in W_{loc}^{2,q}(\RN)$ for any $q\ge1$. Moreover,
 \begin{itemize}
 \item [$(iv)$] $U$ is radially symmetric and radially decreasing.
 \item [$(v)$] $U$ satisfies the Pohoz\v{a}ev identity:
 $$
 \frac{N-2}{2}\int_{\mathbb{R}^N}|\na U|^2+\frac{N}{2}a\int_{\mathbb{R}^N}U^2=\frac{N+\al}{2}\int_{\mathbb{R}^N}(I_\al\ast F(U))F(U).
 $$
 \end{itemize}
\end{itemize}
\eo
\noindent Now, we give some further estimates about the boundedness and decay for any $U\in S_a$. The following Hardy-Littlewood-Sobolev inequality will be used frequently later.
\bl\lab{hls}{\rm \cite[Theorem 4.3]{LL}}
Let $s, r>1$ and $0<\al<N$ with $1/s+1/r=1+\al/N$, $f\in L^s(\RN)$ and $h\in L^r(\RN)$. There exists a constant $C(s, N, \al, r)$ (independent of $f, h$) such that
$$
\left|\int_{\RN}\int_{\RN}f(x)|x-y|^{N-\al}g(y)\,\ud x\ud y\right|\le C(s, N, \al)\|f\|_s\|g\|_r,
$$
where the sharp constant $C(s, N, \al)$ satisfies
$$
C(s, N, \al)\le \frac{N}{sr\al}(|\mathbb{S}^{N-1}|/N)^{1-\al/N}\left[(\frac{1-\al/N}{1-1/s})^{1-\al/N}+(\frac{1-\al/N}{1-1/r})^{1-\al/N}\right].
$$
\el
Now, we adopt some ideas from \cite{MV1,alves} to give the decay of the ground state solutions to \re{lb1}.
\bo\lab{decay}
$S_a$ is uniformly bounded in $L^\iy(\RN)$. Moreover, there exist $C,c>0$, independent of $u\in S_a$, such that $|D^{\al_1}u(x)|\le C\exp(-c|x|), \,x\in \RN$ for $|\al_1|=0,1$.
\eo
\bp First, we give the uniformly boundedness of $u\in S_a$. For any $u\in S_a$, we get
$$
L_a(u)=\frac{2+\al}{2(N+\al)}\int_{\RN}|\na u|^2+\frac{\al a}{2(N+\al)}\int_{\RN} u^2=E_a,
$$
which implies that $S_a$ is bounded in $H^1(\RN)$. Let $H(u)=F(u)/u$ and $K(u)=f(u)$, then by $(F1)$-$(F2)$, there exists $C>0$(independent of $u$) such that
$$
|H(u(x))|,|K(u(x))|\le C\left(|u(x)|^{\al/N}+|u(x)|^{(\al+2)/(N-2)}\right),\,\,x\in\RN.
$$
It follows that $H(u),K(u)\in L^{2N/\al}(\RN)+L^{2N/(\al+2)}(\RN)$, i. e, $H(u)=H^\ast(u)+H_\ast(u)$, $K(u)=K^\ast(u)+K_\ast(u)$ with $H^\ast(u), K^\ast(u)\in L^{2N/\al}(\RN)$ and $H_\ast(u), K_\ast(u)\in L^{2N/(\al+2)}(\RN)$. Moreover, $H^\ast(u), K^\ast(u)$ are uniformly bounded in $L^{2N/\al}(\RN)$ for any $u\in S_a$. So is $H_\ast(u), K_\ast(u)$ in $L^{2N/(\al+2)}(\RN)$. Then by \cite[Proposition 3.1]{MV1}, for any $u\in S_a$ we get $u\in L^p(\RN)$ for $p\in[2,\frac{N}{\al}\frac{2N}{N-2})$. Meanwhile, there exists $C_p$(depending on only $p$) such that for any $p\in[2,\frac{N}{\al}\frac{2N}{N-2})$,
\be\lab{bounded}
\|u\|_p\le C_p\|u\|_2,\,\,\mbox{for all}\,\, u\in S_a.
\ee
Now, we claim that $I_\al\ast F(u)$ is uniformly bounded in $L^\iy(\RN)$ for $u\in S_a$. By $(F1)$-$(F2)$, there exists $c>0$ such that $|F(\tau)|\le C(|\tau|^2+|\tau|^{(N+\al)/(N-2)})$ for all $\tau\in\R$. Then for any $x\in\RN$ and $u\in S_a$, there exists $C(\al)$ (depending only $N,\al$) such that
{\allowdisplaybreaks
\begin{align*}
(I_\al\ast |F(u)|)(x)\le& C(\al)\int_{|x-y|\ge1}\frac{|u|^2+|u|^{(N+\al)/(N-2)}}{|x-y|^{N-\al}}\,\ud y\\
\, &+C(\al)\int_{|x-y|\le1}\frac{|u|^2+|u|^{(N+\al)/(N-2)}}{|x-y|^{N-\al}}\,\ud y\\
\le& C(\al)\int_{\R^2}(|u|^2+|u|^{(N+\al)/(N-2)})\,\ud y\\
\, &+C(\al)\int_{|x-y|\le1}\frac{|u|^2+|u|^{(N+\al)/(N-2)}}{|x-y|^{N-\al}}\,\ud y.
\end{align*}}%
By $\al\ge N-4$, $(N+\al)/(N-2)\in[2,2N/(N-2))$. By \re{bounded}, there exists $c$ (independent of $u$) such that for any $x\in\RN$,
$$
(I_\al\ast |F(u)|)(x)\le C+C(\al)\int_{|x-y|\le1}\frac{|u|^2+|u|^{(N+\al)/(N-2)}}{|x-y|^{N-\al}}\,\ud y.
$$
In the following, we estimate the term
$$
\int_{|x-y|\le1}\frac{|u|^2+|u|^{(N+\al)/(N-2)}}{|x-y|^{N-\al}}\,\ud y.
$$
Choosing $t\in(\frac{N}{\al},\frac{N}{\al}\frac{N}{N-2})$ with $2t\in(2,\frac{N}{\al}\frac{2N}{N-2})$ and $(\al-N)(t/(t-1))+N>0$,
\begin{align*}
&\int_{|x-y|\le1}|x-y|^{\al-N}u^2\,\ud y\\
&\le\|u\|_{2t}^2\left(\int_{|x-y|\le1}|x-y|^{(\al-N)(t/(t-1))}\,\ud y\right)^{1-1/t}\le C_1\|u\|_{2t}^2.
\end{align*}
Choosing $s\in(\frac{N}{\al},\frac{N}{\al}\frac{2N}{N+\al})$ with $s\frac{N+\al}{N-2}\in(2,\frac{N}{\al}\frac{2N}{N-2})$ and $(\al-N)(s/(s-1))+N>0$,
\begin{align*}
&\int_{|x-y|\le1}|x-y|^{\al-N}|u|^{(N+\al)/(N-2)}\,\ud y\\
&\le\|u\|_{s\frac{N+\al}{N-2}}^{(N+\al)/(N-2)}\left(\int_{|x-y|\le1}|x-y|^{(\al-N)(s/(s-1))}\,\ud y\right)^{1-1/s}\\
&\le C_2\|u\|_{s\frac{N+\al}{N-2}}^{(N+\al)/(N-2)}.
\end{align*}
Thus by \re{bounded} $I_\al\ast |F(u)|$ is uniformly bounded in $L^\iy(\RN)$ for all $u\in S_a$. By the standard Moser iteration, $S_a$ is uniformly bounded in $L^\iy(\RN)$. Moreover, by the radial lemma, one knows $u(x)\rg0$ uniformly as $|x|\rg\iy$ for $u\in S_a$. By virtue of the comparison principle, there exist $C,c>0$, independent of $u\in S_a$, such that $|D^{\al_1}u(x)|\le C\exp(-c|x|), \,x\in \RN$ for $|\al_1|=0,1$.

\ep

\subsection{The penalization argument}

To study \re{q1}, it suffices to investigate \re{q2}. Let $H_\e$ be the completion of $C_0^\iy(\RN)$ with respect to the norm
$$
\|u\|_\e=\left(\int_{\RN} (|\na u|^2+V_\e u^2)\right)^{\frac{1}{2}}.
$$
Since $\inf_{\RN}V(x)=1$, $H_\e\subset H^1(\RN)$. For any set $B\subset \RN$ and $\e>0$, we define $B_\e\equiv\{x\in\RN: \e x\in B\}$ and $B^{\dd}\equiv\{x\in\RN: \mbox{dist}(x,B)\le\dd\}$. Let $\mathcal{M}=\cup_{i=1}^k\mathcal{M}^i$ and $O=\cup_{i=1}^k O^i$. Since we are interested in the positive solutions of \re{q1}, from now on, we may assume that $f(t)=0$ for $t \le0.$  \noindent For $u\in H_\e$, let
$$
P_\e(u)=\frac{1}{2}\int_{\RN} |\na u|^2+V_\e u^2-\frac{1}{2}\int_{\RN} (I_\al\ast F(u))F(u).
$$
Fixing an arbitrary $\mu>0$, we define
\be
\chi_\e(x)=
\bcs\nonumber
0,\ \ \ \ \mbox{if}\ \ x\in O_\e,\\
\varepsilon^{-\mu},\ \ \mbox{if}\ \ x\in \RN\setminus O_\e,
\ecs
\chi_\e^i(x)=
\bcs\nonumber
0,\ \ \ \ \mbox{if}\ \ x\in O_\e^i,\\
\varepsilon^{-\mu},\ \ \mbox{if}\ \ x\in \RN\setminus O_\e^i,
\ecs
\ee
and
$$
Q_\e(u)=\left(\int_{\RN}\chi_\e u^2\, \ud x-1\right)_+^2,\ \ Q_\e^i(u)=\left(\int_{\RN}\chi_\e^i u^2\, \ud x-1\right)_+^2.
$$
Let $\G_\e,\G_\e^i(i=1,2,\cdots,k):H_\e\rg \R$ be
given by $$\G_\e(u)=P_\e(u)+Q_\e(u),\G_\e^i(u)=P_\e(u)+Q_\e^i(u).$$ It is standard to check that $\G_\e,\G_\e^i\in
C^1(H_\e)$. To find solutions of \re{q2} which concentrate in $O$ as $\e\rg 0$, we shall search critical points of
$\G_\e$ such that $Q_\e$ is zero. The functional $Q_{\e}$ that was first introduced in \cite{BW}, will act as a penalization to force the concentration phenomena to occur inside $O$.

\noindent Now, we construct a set of approximate solutions of \re{q2}. Let
$$
\delta=\frac{1}{10}\min\{\mbox{dist}(\mathcal{M},O^c),\min\limits_{i\not=j}\mbox{dist}(O^i,O^j)\}.
$$ We fix a $\beta\in (0,\delta)$ and a cut-off $\vp\in C_0^\infty(\RN)$ such that $0\le\vp\le1,\vp(x)=1$ for $|x|\le \beta$ and $\vp(x)=0$ for $|x|\ge 2\beta$. Let $\vp_{\e}(y)=\vp(\e y), y\in\RN$ and for some $x_i\in (\mathcal{M}^i)^{\beta}$, $1\le i\le k$, and $U_i\in S_{m_i}$, we define
$$
U_{\e}^{x_1,x_2,\cdots,x_k}(y)=\sum_{i=1}^k\vp_{\e}\left(y-\frac{x_i}{\e}\right)U_i\left(y-\frac{x_i}{\e}\right).
$$
As in \cite{bj}, we will find a solution near the set
$$
X_{\e}=\{U_{\e}^{x_1,x_2,\cdots,x_k}\,\,|\,\,x_i\in (\mathcal{M}^i)^{\beta}, U_i\in S_{m_i},i=1,2,\cdots,k\}
$$
for sufficiently small $\e>0$. For each $1\le i\le k$, Choosing some $U_i\in S_{m_i}$ and $x_i\in\mathcal{M}^i$ but fixed, define
$$
W_{\e,t}^i(\cdot)\equiv(\vp_\e U_{i,t})(\cdot-x_i/\e)\, \, \mbox{where}\, \, U_{i,t}(\cdot)=U_i(\cdot/t).
$$
\bl\lab{nt}
There exist $T_i>0, i=1,2,\cdots,k$, such that for $\e>0$ small enough, $\Gamma_{\e}(W_{\e,T_i}^i)<-2, i=1,2,\cdot,k$.
\el
\bp
By a direct calculation, we get  $\G_\e(W_{\e,t}^i)=P_\e(W_{\e,t}^i)$ for any $t>0$ and for any $1\le i\le k$,
$$
L_{m_i}(U_{i,t})=\frac{t^{N-2}}{2}\int_{\RN}|\nabla U_i|^2+\frac{t^{N}}{2}m_i\int_{\RN}|U_i|^2-\frac{t^{N+\al}}{2}\int_{\RN}(I_\al\ast F(U_i))F(U_i).
$$
Then there exists $T_i>1$ such that $L_{m_i}(U_{i,T_i})<-2$ for $t>T_i$. Notice that
$$
P_\e(W_{\e,t}^i)=L_{m_i}(W_{\e,t}^i)+\frac12\int_{\RN}(V_\vr(x)-m_i)(W_{\e,t}^i)^2,
$$
we have
$$
P_\e(W_{\e,t}^i)=L_{m_i}(U_{i,t})+O(\e),
$$
by the decay property of $U_{i}$ in proposition \ref{decay}. Consequently, we know that $\G_\e(W_{\e,T_i}^i)<-2$ for $\e>0$ small.
\ep
For any $1\le i\le k$, let $\gamma_\e^i(t)(\cdot)=W_{\e,t}^i(\cdot)$ for $t>0$. Due to $N\ge3$, $\lim\limits_{t\rightarrow0}\|W_{\e,t}^i\|_{\e}=0$, let
$\gamma_{\e}^i(0)=0$. Then as in \cite{bj}, we define a min-max value $C_\e^i:$
$$C_\e^i=\inf_{\vp\in \Phi_\e^i}\max_{s_i\in [0,T_i]}\G_\e^i(\vp(s_i)),$$
where $\Phi_\e^i=\{\vp\in C([0,T_i],H_{\e}):
\vp(0)=0,\vp(T_i)=\gamma_\e^i(T_i)\}$.
Similar to Proposition 2 and 3 in \cite{byeon}, we have
\bo\lab{prop2.2} For any $1\le i\le k$, we have
$$\lim\limits_{\e\rg 0}C_\e^i=E_{m_i}.$$
\eo
\noindent Finally, let
$$
\gamma_\e(s)=\sum_{i=1}^k\gamma_\e^i(s_i),\, s=(s_1,s_2,\cdots,s_k)
$$
and $$D_\e=\max_{s\in T}\G_\e(\gamma_\e(s)),$$ where $T\equiv[0,T_1]\times\cdot\times[0,T_k]$. Since $\mbox{supp}(\gamma_\e(s))\subset \mathcal{M}^{\beta}_{\e}$ for each $s\in T$, it follows that
$$
\G_\e(\gamma_\e(s))=P_\e(\gamma_\e(s))=\sum_{i=1}^kP_\e(\gamma_\e^i(s)).
$$
By the Pohoz$\check{a}$ev's identity, for any $1\le i\le k$, we have
$$
L_{m_i}(U_{i,t})=\left(\frac{t^{N-2}}{2}-\frac{N-2}{N+\al}\frac{t^{N+\al}}{2}\right)\int_{\RN}|\nabla U_i|^2+\left(\frac{t^{N}}{2}-\frac{N}{N+\al}\frac{t^{N+\al}}{2}\right)m_i\int_{\RN}|U_i|^2.
$$
Let $$g_1(t)=\frac{t^{N-2}}{2}-\frac{N-2}{N+\al}\frac{t^{N+\al}}{2},\,\ g_2(t)=\frac{t^{N}}{2}-\frac{N}{N+\al}\frac{t^{N+\al}}{2},$$
then it is easy to know $g_j'(t)>0$ for $t\in(0,1)$ and $g_j'(t)<0$ for $t>1$, $j=1,2$. Thus, for any $1\le i\le k$, $L_{m_i}(U_{i,t})$ achieves a unique maximum point at $t=1$ for $t>0$, i. e.,
$$
\max_{t>0}L_{m_i}(U_{i,t})=L_{m_i}(U_{i})=E_{m_i},
$$
which leads to the following conclusion.
\bo\lab{prop2.3}\noindent
\begin{itemize}\item [$(i)$] $\lim_{\e\rg0}D_\e=\sum_{i=1}^kE_{m_i}:=E$;
\item [$(ii)$] $\limsup_{\e\rg0}\max_{s\in\pa T}\G_\e(\gamma_\e(s))\le\ti{E}$, where $\ti{E}=\max_{1\le j\le k}(\sum_{i\not=j}E_{m_i})$;
\item [$(iii)$] for any $d>0$, there exists $\al_0>0$ such that for $\e>0$ small,
$$\hbox{$\G_\e(\gamma_\e(s))\ge D_\e-\al_0$ implies that $\gamma_\e(s)\in X_\e^{d/2}$.}$$
\end{itemize}
\eo

\noindent Now define
$$
\G_\e^\al:=\{u\in H_\e: \G_\e(u)\le\al\}
$$
and for a set $A\subset H_\e$ and $\al>0$, let
$$
A^\al:=\{u\in H_\e: \inf_{v\in A}\|u-v\|_\e\le\al\}.
$$
In the following, we will construct a special PS-sequence of $\G_\e$, which is localized in some neighborhood $X_\e^d$ of $X_\e$.

\begin{proposition}\label{prop5}
Let $\{\e_j\}$ with $\lim\limits_{j\rightarrow
\infty}\e_j=0$, $\{u_{\e_j}\}\subset X_{\e_j}^d$
be such that
\begin{equation}\label{4sec27}
\lim\limits_{j\rightarrow
\infty}\Gamma_{\e_j}(u_{\e_j})\leq E \ \ \text{and} \ \
\lim\limits_{j\rightarrow
\infty}\Gamma_{\e_j}^{'}(u_{\e_j})=0.
\end{equation}
Then for sufficiently small $d>0$, there exist, up to a subsequence,
$\{y_j^i\}\subset \R^N$, $i=1,2,\cdots,k$, points $x^i\in \mathcal
{M}^i$, $U_i\in S_{m_i}$ such that
\begin{equation}\label{4sec28}
\lim\limits_{j\rightarrow \infty}\left|\e_jy_j^i-x^i\right|=0,
\end{equation}
and \begin{equation}\label{4sec29} \lim\limits_{j\rightarrow
\infty}\left\|u_{\e_j}-\sum\limits_{i=1}^k\varphi_{\e_j}(\cdot-y_j^i)U_i(\cdot
-y_j^i)\right\|_{\e_j}=0.
\end{equation}
\end{proposition}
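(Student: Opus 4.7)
My plan is to extract from $u_{\e_j}\in X_{\e_j}^d$ an explicit $k$-bubble decomposition, identify each weak limit as a least-energy solution of the limit problem \re{lb1} with parameter $m_i$, and upgrade to strong convergence via a sharp energy balance. By the definition of $X_{\e_j}^d$ there exist $\tilde x_j^i\in (\mathcal{M}^i)^\beta$ and $\tilde U_{i,j}\in S_{m_i}$ with
$$
\Big\|u_{\e_j}-\sum_{i=1}^k \vp_{\e_j}(\cdot-\tilde x_j^i/\e_j)\,\tilde U_{i,j}(\cdot-\tilde x_j^i/\e_j)\Big\|_{\e_j}\le d+o(1).
$$
Since $S_{m_i}$ is $H^1$-compact (Proposition \ref{prop2.1}) and the $\tilde x_j^i$ lie in a bounded set, up to a subsequence $\tilde x_j^i\rg \bar x^i\in \overline{(\mathcal{M}^i)^\beta}\su O^i$ and $\tilde U_{i,j}\rg \tilde U_i$ in $H^1(\RN)$, so $\{u_{\e_j}\}$ is bounded in $H_{\e_j}$. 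I would then choose $y_j^i$ to be a maximizer of $u_{\e_j}$ over the ball of radius $\beta/\e_j$ centered at $\tilde x_j^i/\e_j$; this yields $\e_j y_j^i\rg \bar x^i$, and the translates $v_j^i(x):=u_{\e_j}(x+y_j^i)$ are bounded in $H^1(\RN)$ with (up to a subsequence) $v_j^i\rhu W_i$ weakly.

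Next I would show that each $W_i$ is a nontrivial solution of
$$
-\DD W_i+V(\bar x^i)W_i=(I_\al\ast F(W_i))f(W_i),\qquad x\in\RN.
$$
Because $|\bar x^i-\bar x^k|>2\beta$ for $i\ne k$, one has $|y_j^i-y_j^k|\rg\iy$, so testing $\G'_{\e_j}(u_{\e_j})=o(1)$ against $\vp(\cdot-y_j^i)$ with $\vp\in C_0^\iy(\RN)$ and passing to the limit after decoupling the nonlocal term via Hardy-Littlewood-Sobolev (Lemma \ref{hls}) and the decay of $I_\al$ yields the displayed equation. Nontriviality of $W_i$ follows because elements of $S_{m_i}$ share a uniform positive maximum at the origin (Proposition \ref{decay} together with Proposition \ref{prop2.1}(iv)), so $v_j^i(0)$ is bounded below away from $0$, and elliptic regularity for the nonlocal equation transfers this lower bound to $W_i$.

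The core of the argument is a Brezis-Lieb-type splitting for the nonlocal functional. Setting $r_j:=u_{\e_j}-\sum_i \vp_{\e_j}(\cdot-y_j^i)W_i(\cdot-y_j^i)$ and using $Q_{\e_j}(u_{\e_j})\ge 0$, I expect
$$
\G_{\e_j}(u_{\e_j})\ge \sum_{i=1}^k L_{V(\bar x^i)}(W_i)+\frac{1}{2}\|r_j\|_{\e_j}^2+o(1).
$$
Since $\bar x^i\in O^i$ gives $V(\bar x^i)\ge m_i$, and by Proposition \ref{prop2.1}(ii) combined with the monotonicity $a\mapsto E_a$ the energy of each nontrivial bubble satisfies $L_{V(\bar x^i)}(W_i)\ge E_{V(\bar x^i)}\ge E_{m_i}$, the hypothesis $\limsup\G_{\e_j}(u_{\e_j})\le E=\sum_i E_{m_i}$ forces equality throughout. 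This gives $V(\bar x^i)=m_i$ (hence $\bar x^i\in \mathcal{M}^i$, proving \eqref{4sec28} with $x^i=\bar x^i$), each $W_i$ is a least-energy solution of \re{lb1} at level $m_i$, and $\|r_j\|_{\e_j}\rg 0$, which is \eqref{4sec29}. Because $v_j^i$ attains its maximum at $0$ and $W_i\ne 0$, radial symmetry of the limit together with $W_i(0)=\max W_i$ places $W_i\in S_{m_i}$.

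The main obstacle will be the nonlocal Brezis-Lieb step: the Riesz kernel $I_\al(x)\sim |x|^{\al-N}$ decays too slowly to decouple cross terms such as $\iint F(v_j^i(x-y_j^i))I_\al(x-z)F(v_j^k(z-y_j^k))\,\ud x\,\ud z$ by a naive compact-support argument. I would handle this by decomposing $F(v_j^i)$ into a piece compactly supported in a ball of radius $R$ (possible since Proposition \ref{decay} yields exponential decay of each $U_i$) plus an $L^{2N/(N+\al)}$-small remainder, then applying Hardy-Littlewood-Sobolev with the hypothesis $\al>(N-4)_+$ to keep $F(u)$ in the required Lebesgue class; the separation $|y_j^i-y_j^k|\rg\iy$ then kills the cross contributions. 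A secondary but essential point is to verify that the penalization mass $\int_{\RN\setminus O_{\e_j}}u_{\e_j}^2$ tends to zero, so that $Q_{\e_j}(u_{\e_j})\rg 0$ does not disturb the energy balance; this follows from the uniform boundedness of $\G_{\e_j}(u_{\e_j})$ and the definition of $\chi_\e$ with its $\e^{-\mu}$ weight.
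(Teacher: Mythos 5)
Your overall architecture — split $u_{\e_j}$ into $k$ bubbles plus a remainder, show each weak limit solves the frozen-coefficient Choquard equation, force equality in the chain $L_{V(\bar x^i)}(W_i)\ge E_{V(\bar x^i)}\ge E_{m_i}$ against the hypothesis $\limsup\G_{\e_j}(u_{\e_j})\le E$, and conclude $\bar x^i\in\mathcal M^i$ and strong $H_\e$ convergence — is the same as the paper's. The essential differences, and the place where your argument breaks down, is the nonlocal decoupling step.

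The gap is in how you control the cross terms and the remainder in the Brezis--Lieb splitting. You propose to use the exponential decay of $U_i\in S_{m_i}$ from Proposition \ref{decay} to write $F(v_j^i)$ as a compactly supported piece plus an $L^{2N/(N+\al)}$-small remainder. But $v_j^i$ is a translate of $u_{\e_j}$, not of $U_i$: from membership in $X_{\e_j}^d$ you only know $v_j^i$ is within $O(d)$ (not $o(1)$) in $H^1$ of a cut-off bubble, and the $H^1$-size-$d$ error has no pointwise decay whatsoever. Consequently the ``small'' remainder you extract from $F(v_j^i)$ is only $O(d)$ in $L^{2N/(N+\al)}$, so the cross contributions in the nonlocal term are $O(d)$ rather than $o_{\e}(1)$, and the resulting energy inequality only yields $\limsup_j\|r_j\|_{\e_j}^2\le Cd$ rather than $\|r_j\|_{\e_j}\to 0$. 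To get $o_\e(1)$ one must first prove a genuine vanishing of $u_{\e_j}$ on the transition annulus $\bigcup_i B(x_{\e_j}^i/\e_j,2\beta/\e_j)\setminus B(x_{\e_j}^i/\e_j,\beta/\e_j)$ (and for the far-field cross term, use that $I_\al(x-y)\le C(\e/\beta)^{N-\al}$ when $|x-y|\ge\beta/\e$). The paper does precisely this in Step 1 of its proof: it rules out $L^2$ concentration at any $x_\e$ in the annulus by extracting a third nontrivial bubble there, which would carry an extra $\tfrac{N+\al}{\al+2}E_m$ of norm and contradict the $2d$ proximity to $X_\e$; then Lions' vanishing lemma gives $\int_{\text{annulus}}|u_\e|^q\to 0$ for all $q\in[2,2^*)$, which combined with Hardy--Littlewood--Sobolev yields the $o_\e(1)$ splitting. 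Without this mechanism your scheme does not close.

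Two smaller issues. First, choosing $y_j^i$ as maximizers over $B(\tilde x_j^i/\e_j,\beta/\e_j)$ only gives $\e_j y_j^i\to$ some point of $\overline{B(\bar x^i,\beta)}$, not $\bar x^i$; this is harmless in the end (the limit point lands in $\mathcal M^i$ once equality in the energy chain is forced), but the assertion ``this yields $\e_j y_j^i\to\bar x^i$'' is not justified as written. Second, the nontriviality of $W_i$ via a uniform lower bound on $v_j^i(0)$ is not established: $v_j^i$ is not an element of $S_{m_i}$, and $H^1$-proximity of order $d$ does not transfer the pointwise maximum lower bound of the limit profiles to $u_{\e_j}$ without an a priori $L^\iy$ estimate and a non-vanishing argument; the paper instead gets nontriviality from the $L^2$ mass lower bound on $u_{1,\e}^i$ near $x_\e^i/\e$ inherited from the $X_\e^d$ proximity to a fixed nonzero $Z_i\in S_{m_i}$, combined with the ruled-out dichotomy in Step 3.
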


\begin{proof}
Without confusion, we write $\e$ for $\e_j$. Since
$S_{m_i}$ is compact, then there exist $Z_i\in S_{m_i}$,
$x_{\e}^i\in (\mathcal {M}^i)^{\beta}$, $x^i\in (\mathcal
{M}^i)^{\beta}$, $i=1,2,\cdots,k$, $\lim\limits_{\e\rightarrow
0}x_{\e}^i=x^i$, such that up to a subsequence, denoted still by
$\{u_{\e}\}$ satisfying that for sufficiently small $\e>0$,
\begin{equation}\label{4sec30}
\left\|u_{\e}-\sum\limits_{i=1}^k\varphi_{\e}\left(\cdot
-\frac{x_{\e}^i}{\e}\right)Z_i\left(\cdot
-\frac{x_{\e}^i}{\e}\right)\right\|_{\e}\leq 2d.
\end{equation}
Set $u_{1,\e}(x)=\sum\limits_{i=1}^k\varphi_{\e}\left(x
-\frac{x_{\e}^i}{\e}\right)u_{\e}$,
$u_{2,\e}(x)=u_{\e}(x)-u_{1,\e}(x)$.

{\bf Step 1.} We claim that
\begin{equation}\label{4sec31}
\Gamma_{\e}(u_{\e})\geq
\Gamma_{\e}(u_{1,\e})+\Gamma_{\e}(u_{2,\e})+O(\e).
\end{equation}
Suppose that there exist $x_{\e}\in \bigcup\limits_{i=1}^k
B\left(\frac{x_{\e}^i}{\e},\frac{2\beta}{\e}\right)\setminus
B\left(\frac{x_{\e}^i}{\e},\frac{\beta}{\e}\right)$
and $R>0$, such that
\begin{equation}\label{4sec32}
\varliminf\limits_{\e\rightarrow 0}\int_{B(x_{\e},
R)}|u_{\e}|^2dx>0.
\end{equation}
Let $W_{\e}=u_{\e}(x+x_{\e})$. Using \eqref{4sec32}
, we get
\begin{equation}\label{4sec33}
\varliminf\limits_{\e\rightarrow 0}\int_{B(0,
R)}|W_{\e}|^2dx>0.
\end{equation}
Since $\e x_{\e}\in \bigcup\limits_{i=1}^k
B\left(x_{\e}^i, 2\beta\right)\setminus B\left(x_{\e}^i,
\beta\right)$, by taking a subsequence, we can assume $\e
x_{\e}\rightarrow x_0\in \bigcup\limits_{i=1}^kB\left(x^i,
2\beta\right)\setminus B\left(x^i, \beta\right)$. From
\eqref{4sec30}, one has $\{W_{\e}\}$ is bounded in
$H_{\e}$ and $H^1(\RN)$. Without loss of generality, we assume that $W_{\e}\rightharpoonup W$ weakly in $H^1(\RN)$ and strongly in $ L_{loc}^q(\RN)$ for $q\in[2,2^\ast)$. Clearly, \eqref{4sec33} implies that  $W\neq 0$ and from
\eqref{4sec27} we get that $W$ is a nontrivial solution of
\begin{equation}\label{4sec34}
-\Delta W+V(x_0)W=(I_\al\ast F(W))f(W)\,\, \mbox{in}\,\,\RN.
\end{equation}
Once choosing $R$ large enough, we deduce by the weak convergence
that
\begin{equation}\label{4sec35}\aligned
&\varliminf\limits_{\e\rightarrow 0}\int_{B(x_{\e},
R)}(|\na u_{\e}|^2+V_\e(x)u_\e^2)dx=\varliminf\limits_{\e\rightarrow
0}\int_{B(0,
R)}(|\na W_{\e}|^2+V_\e(x+x_\e)|W_\e^2|)dx\\
&\geq \int_{B(0, R)}(|\na W|^2+V(x_0)|W|^2)dx\geq
\frac{1}{2}\int_{\R^N}(|\na W|^2+V(x_0)|W|^2)dx.\endaligned
\end{equation}
By Proposition \ref{prop2.1}, $E_a$ is a mountain pass value. One can get $E_a$ is strictly increasing for $a>0$. Then
$$L_{V(x_0)}(W)\geq E_{V(x_0)}\geq E_m, \ \
\text{since} \ V(x_0)\geq m.$$ Thus by
\eqref{4sec35} and Pohozaev identity, we get
$$
\aligned &\varliminf\limits_{\e\rightarrow 0}\int_{B(x_{\e},
R)}(|\na u_{\e}|^2+V_\e(x)u_\e^2)dx\geq
\frac{1}{2}\int_{\R^N}(|\nabla W|^2+V(x_0)W^2)dx\\
&\ge\frac{N+\al}{\al+2}L_{V(x_0)}(W)\ge\frac{N+\al}{\al+2}E_{V(x_0)}(W),
\endaligned
$$
which contradicts  \eqref{4sec30} for $d>0$ small enough. It follows from \cite[Lemma I.1]{Lions1}
that
\begin{equation}\label{4sec36}
\varlimsup\limits_{\e\rightarrow
0}\int_{\bigcup\limits_{i=1}^kB\left(\frac{x_{\e}^i}{\e},\frac{2\beta}{\e}\right)\setminus
B\left(\frac{x_{\e}^i}{\e},\frac{\beta}{\e}\right)}|u_{\e}|^qdx=0,\
\ \ \mbox{for any}\,\, 2\leq q<2^\ast.
\end{equation}
As a consequence, we can derive that
\begin{equation}\label{4sec37}
\int_{\RN}(I_\al\ast F(u_\e))F(u_\e)=\int_{\RN}(I_\al\ast F(u_{1,\e}))F(u_{1,\e})+\int_{\RN}(I_\al\ast F(u_{2,\e}))F(u_{2,\e})+o_\e(1).
\end{equation}
Indeed, let $G(u_\e):=F(u_\e)-F(u_{1,\e})-F(u_{2,\e})$, then
$$
G(u_\e)(x)=0,\,\, \mbox{if}\,\,x\not\in\bigcup\limits_{i=1}^kB\left(\frac{x_{\e}^i}{\e},\frac{2\beta}{\e}\right)\setminus
B\left(\frac{x_{\e}^i}{\e},\frac{\beta}{\e}\right).
$$
By $(F1)$-$(F2)$, for any $\dd_0>0$ there exists $c>0$(depending on $\dd_0$) such that $|G(u_\e)|\le c|u_\e|^2+\dd_0|u_\e|^{(N+\al)/(N-2)}$. Then by Hardy-Littlewood-Sobolev inequality,
{\allowdisplaybreaks
\begin{align}\lab{vv}
&\left|\int_{\RN}(I_\al\ast G(u_\e))F(u_\e)\right|\nonumber\\
&\le C(N,\al)\left(\int_{\RN}|G(u_\e)|^{2N/(N+\al)}\right)^{(N+\al)/(2N)}\left(\int_{\RN}|F(u_\e)|^{2N/(N+\al)}\right)^{(N+\al)/(2N)}\nonumber\\
&\le C'(N,\al)\left[\int_{\bigcup\limits_{i=1}^kB\left(\frac{x_{\e}^i}{\e},\frac{2\beta}{\e}\right)\setminus
B\left(\frac{x_{\e}^i}{\e},\frac{\beta}{\e}\right)}(c|u_\e|^{4N/(N+\al)}+\dd_0^{2N/(N+\al)}|u_\e|^{2^\ast})\right]^{(N+\al)/(2N)}.
\end{align}}%
Recalling that $\al>N-4$, $4N/(N+\al)\in(2,2^\ast)$. By the arbitrary of $\dd_0$, it follows from \re{4sec36} and \re{vv} that
$$
\int_{\RN}(I_\al\ast G(u_\e))F(u_\e)\rg0,\,\, \mbox{as}\,\ \e\rg0.
$$
Similarly,
$$
\int_{\RN}(I_\al\ast F(u_{1,\e}))G(u_\e)\rg0,\,\int_{\RN}(I_\al\ast F(u_{2,\e}))G(u_\e)\rg0,\,\, \mbox{as}\,\ \e\rg0.
$$
Then
{\allowdisplaybreaks
\begin{align*}
&\int_{\RN}(I_\al\ast F(u_\e))F(u_\e)\\
&=\int_{\RN}[I_\al\ast(F(u_{1,\e})+F(u_{2,\e})+G(u_{\e}))](F(u_{1,\e})+F(u_{2,\e})+G(u_{\e}))\\
&=\int_{\RN}[I_\al\ast F(u_{1,\e})]F(u_{1,\e})+\int_{\RN}[I_\al\ast F(u_{2,\e})]F(u_{2,\e})\\
&\,\, \,\,\,\,+\int_{\RN}[I_\al\ast F(u_{2,\e})]F(u_{1,\e})+\int_{\RN}[I_\al\ast F(u_{1,\e})]F(u_{2,\e})+o_\e(1).
\end{align*}
}%
On the other hand,
{\allowdisplaybreaks
\begin{align*}
&\left|\int_{\RN}[I_\al\ast F(u_{1,\e})]F(u_{2,\e})\right|\\
&\le\int_{\left(\bigcup\limits_{i=1}^kB\left(\frac{x_{\e}^i}{\e},\frac{2\beta}{\e}\right)\right)\times\left(\R^N\setminus
\bigcup\limits_{i=1}^kB\left(\frac{x_{\e}^i}{\e},\frac{\beta}{\e}\right)\right)}I_\al(x-y)|F(u_{1,\e}(x))||F(u_{2,\e}(y))|\\
&=\int_{\left(\bigcup\limits_{i=1}^kB\left(\frac{x_{\e}^i}{\e},\frac{2\beta}{\e}\right)\setminus
B\left(\frac{x_{\e}^i}{\e},\frac{\beta}{\e}\right)\right)\times\left(\R^N\setminus
\bigcup\limits_{i=1}^kB\left(\frac{x_{\e}^i}{\e},\frac{\beta}{\e}\right)\right)}I_\al(x-y)|F(u_{1,\e}(x))||F(u_{2,\e}(y))|\\
&\,\,\,\,\,\,+\int_{\bigcup\limits_{i=1}^k
B\left(\frac{x_{\e}^i}{\e},\frac{\beta}{\e}\right)\times\left(\R^N\setminus
\bigcup\limits_{i=1}^kB\left(\frac{x_{\e}^i}{\e},\frac{2\beta}{\e}\right)\right)}I_\al(x-y)|F(u_{1,\e}(x))||F(u_{2,\e}(y))|\\
&\,\,\,\,\,\,+\int_{\bigcup\limits_{i=1}^k
B\left(\frac{x_{\e}^i}{\e},\frac{\beta}{\e}\right)\times
\left(\bigcup\limits_{i=1}^kB\left(\frac{x_{\e}^i}{\e},\frac{2\beta}{\e}\right)\setminus
B\left(\frac{x_{\e}^i}{\e},\frac{\beta}{\e}\right)\right)}I_\al(x-y)|F(u_{1,\e}(x))||F(u_{2,\e}(y))|\\
&:=I_1+I_2+I_3\rightarrow 0.
\end{align*}
}%
Similar as above, by Hardy-Littlewood-Sobolev inequality and \re{4sec36}, $I_1,I_3\rg 0$ as $\e\rg0$. Obviously,
$$
|x-y|\ge\frac{\beta}{\e} \,\,\mbox{if}\,\,(x,y)\in\bigcup\limits_{i=1}^k
B\left(\frac{x_{\e}^i}{\e},\frac{\beta}{\e}\right)\times\left(\R^N\setminus
\bigcup\limits_{i=1}^kB\left(\frac{x_{\e}^i}{\e},\frac{2\beta}{\e}\right)\right).
$$
Then
$$
|I_2|\le C(N,\al,\beta)\e^{N-\al}\left(\int_{\RN}c|u_\e|^2+\dd|u_\e|^{(N+\al)/(N-2)}\right)^2.
$$
Noting that $\al\in((N-4)_+,N)$, $(N+\al)/(N-2)\in(2,2^\ast)$. Then we get $I_2\rg0$ as $\e\rg0$. Similarly, $\int_{\RN}[I_\al\ast(F(u_{2,\e})]F(u_{1,\e})\rg0$ as $\e\rg0$. So we get \eqref{4sec37}.
\vskip0.1in
By \eqref{4sec37},
{\allowdisplaybreaks
$$\aligned
\Gamma_{\e}(u_{\e})&=\Gamma_{\e}(u_{1,\e})+\Gamma_{\e}(u_{2,\e})+o_\e(1)\\
&\,\,\,\,\,+\sum\limits_{i=1}^k\int_{B\left(\frac{x_{\e}^i}{\e},\frac{2\beta}{\e}\right)\setminus
B\left(\frac{x_{\e}^i}{\e},\frac{\beta}{\e}\right)}\na\left(\varphi_{\e}
\left(y-\frac{x_{\e}^i}{\e}\right)u_{\e}\right)
\na\left(\left(1-\varphi_{\e}\left(y-\frac{x_{\e}^i}{\e}\right)\right)u_{\e}\right)\\
&\,\,\,\,\,+\sum\limits_{i=1}^k\int_{B\left(\frac{x_{\e}^i}{\e},\frac{2\beta}{\e}\right)\setminus
B\left(\frac{x_{\e}^i}{\e},\frac{\beta}{\e}\right)}V_{\e}(x)\varphi_{\e}\left(y-\frac{x_{\e}^i}{\e}\right)
\left(1-\varphi_{\e}\left(y-\frac{x_{\e}^i}{\e}\right)\right)|u_{\e}|^2\\
&\geq
\Gamma_{\e}(u_{1,\e})+\Gamma_{\e}(u_{2,\e})+o_\e(1).
\endaligned$$}%
Therefore, we get \eqref{4sec31}.

{\bf Step 2.} We claim that for $d,\e>0$ small enough,
\begin{equation}\label{4sec02}\Gamma_{\e}(u_{2,\e})\geq\frac{1}{4}\|u_{2,\e}\|_{\e}^2.\end{equation}
Indeed,
\begin{equation}\label{4sec38}\aligned
\Gamma_{\e}(u_{2,\e})&\geq P_{\e}(u_{2,\e})=\frac{1}{2}\|u_{2,\e}\|_{\e}^2-\frac{1}{2}\int_{\RN}(I_\al\ast F(u_{2,\e}))F(u_{2,\e}).
\endaligned
\end{equation}
By $(F1)$-$(F2)$, for any $\rho>0$ there exists $c>0$(depending on $\rho$) such that $|F(t)|\le \rho t^2+c|t|^{(N+\al)/(N-2)}$ for $t\in\R$. Then by Hardy-Littlewood-Sobolev inequality,
{\allowdisplaybreaks
\begin{align*}
\left|\int_{\RN}(I_\al\ast F(u_{2,\e}))F(u_{2,\e})\right|&\le C(N,\al)\left(\int_{\RN}|F(u_{2,\e})|^{2N/(N+\al)}\right)^{(N+\al)/N}\\
&\le C'(N,\al)\left[\int_{\RN}(\rho^{2N/(N+\al)}|u_\e|^{4N/(N+\al)}+c|u_\e|^{2^\ast})\right]^{(N+\al)/N}\\
&\le C''(N,\al)\left(\rho^2\|u_{2,\e}\|_{4N/(N+\al)}^2+\|u_{2,\e}\|_{2^\ast}^{2(N+\al)/(N-2)}\right).
\end{align*}}%
Notice that $4N/(N+\al)\in(2,2^\ast)$ and $2(N+\al)/(N-2)>2$. By Sobolev's inequality,
$$
\left|\int_{\RN}(I_\al\ast F(u_{2,\e}))F(u_{2,\e})\right|\le C'''(N,\al)\left(\rho^2\|u_{2,\e}\|_{\e}^2+\|u_{2,\e}\|_{\e}^{2(N+\al)/(N-2)}\right)
$$
Since $\{u_{\e}\}$ is bounded, we deduce from \eqref{4sec30}
that $\|u_{2,\e}\|_{\e}\leq 4d$ for sufficiently small
$\e>0$. Thus, taking $d$ and $\rho$ small enough, we have $$\Gamma_{\e}(u_{2,\e})\geq\frac{1}{4}\|u_{2,\e}\|_{\e}^2.$$

{\bf Step 3.} For each $i=1,2,\cdots, k$, we define
\begin{eqnarray}\label{4sec43}
u_{1,\e}^i(x)=\left\{
                  \begin{array}{ll}
                    u_{1,\e}(x), \ \ &x\in O_{\e}^i,\\
                    0, \ \ &x\notin O_{\e}^i,
                  \end{array}
                \right.
\end{eqnarray}
and set
$W_{\e}^i(x)=u_{1,\e}^i\left(x+\frac{x_{\e}^i}{\e}\right)$.
Then for fixed $i\in \{1,2,\cdots,k\}$, we can assume, up to a
subsequence that as $\e\rg0$,
$$W_{\e}^i\rightharpoonup W^i\ \ \text{weakly} \in H^1(\RN),$$
and $W^i$ is a solution of
\begin{equation*}
-\Delta W^i+V(x^i)W^i=(I_\al\ast F(W^i))f(W^i),\,\,\,x\in\RN.
\end{equation*}

In the following, we prove that $W_{\e}^i\rightarrow W^i$ strongly in $H_{\e}$. First, we prove that $W_{\e}^i\rightarrow W^i$ strongly in $L^p(\RN)$ for any $p\in(2,2^\ast)$. Otherwise, there exist $x_{\e}\in\RN$ and $R>0$ such that
$$
\varliminf\limits_{\e\rightarrow 0}\int_{B(x_{\e},R)}|W_{\e}^i- W^i|^2>0.
$$
Obviously, $|x_\e|\rg\iy$ as $\e\rg0$. Let $z_\e=x_{\e}+x_\e^i/\e$, then
\be\label{4sec44}
\varliminf\limits_{\e\rightarrow 0}\int_{B(z_\e,R)}|u_{1,\e}^i|^2>0.
\ee
Since $\vp(x)=0$ for $|x|\ge2\beta$, $|x_\e|\le3\beta/\e$(In fact $|x_\e|\le2\beta/\e$). Since $\e z_{\e}\in B(x_{\e}^i, 3\beta)$, we can
assume $\e z_{\e}\rightarrow z^i\in O^i$ as
$\e\rightarrow 0$.\\

Define
$\widetilde{W}_{\e}^i(x)=u_{1,\e}^i(x+z_{\e})$, then up to a
subsequence that as $\e\rg0$,
$$\widetilde{W}_{\e}^i\rightharpoonup \widetilde{W}^i\not\equiv0\ \ \text{weakly in}\,\, H^1(\RN)$$ and $\widetilde{W}^i$ satisfies
\begin{equation*}
-\Delta \widetilde{W}^i+V(z^i)\widetilde{W}^i=(I_\al\ast F(\widetilde{W}^i))f(\widetilde{W}^i),\,\,\,x\in\RN.
\end{equation*}
Similar as in Step 1, we can get a contradiction. So $W_{\e}^i\rightarrow W^i$ strongly in $L^p(\RN)$ for any $p\in(2,2^\ast)$, which implies
\begin{equation}\label{4sec45}
\lim\limits_{\e\rightarrow
0}\int_{\RN}(I_\al\ast F(W_\e^i))F(W_\e^i)=\int_{\RN}(I_\al\ast F(W^i))F(W^i).
\end{equation}
Then given any $i=1,2,\cdots,k$, we deduce that
{\allowdisplaybreaks
\begin{equation}\label{4sec47}\aligned
&\varlimsup\limits_{\e\rightarrow0}\Gamma_{\e}(u_{1,\e}^i)\ge\varlimsup\limits_{\e\rightarrow0}P_{\e}(u_{1,\e}^i)\\
&=\varlimsup\limits_{\e\rightarrow0}\frac{1}{2}\int_{\R^N}|\na W_\e^i|^2+V_\e(x+x_\e^i/\e)|W_\e^i|^2-\frac{1}{2}\int_{\RN}(I_\al\ast F(W^i))F(W^i)\\
&\ge\frac{1}{2}\int_{\R^N}|\na W^i|^2+V(x^i)|W^i|^2-\frac{1}{2}\int_{\RN}(I_\al\ast F(W^i))F(W^i)\\
&=L_{V(x^i)}(W^i)\geq E_{m_i}.
\endaligned
\end{equation}}%
Now, by the estimate \eqref{4sec31}, we get
\begin{equation}\label{4sec48}
\aligned \varlimsup\limits_{\e\rightarrow
0}\left(\Gamma_{\e}(u_{2,\e})+\sum\limits_{i=1}^k\Gamma_{\e}(u_{1,\e}^i)\right)
\leq \varlimsup\limits_{\e\rightarrow
0}\Gamma_{\e}(u_{\e})\leq E=\sum\limits_{i=1}^kE_{m_i}.
\endaligned
\end{equation}
On the other hand, by \eqref{4sec02} and \eqref{4sec47}, by choosing $d>0$ small enough,
\begin{equation}\label{4sec49}
\varlimsup\limits_{\e\rightarrow
0}\left(\Gamma_{\e}(u_{2,\e})+\sum\limits_{i=1}^k\Gamma_{\e}(u_{1,\e}^i)\right)\geq
\sum\limits_{i=1}^kE_{m_i}.
\end{equation}
Therefore, \eqref{4sec48} and \eqref{4sec49} imply that by choosing $d>0$ small enough, for any
$i=1,2,\cdots,k$
\begin{equation}\label{4sec08}
\lim\limits_{\e\rightarrow
0}\Gamma_{\e}(u_{1,\e}^i)=E_{m_i}.
\end{equation}
By \eqref{4sec02}, $\|u_{2,\e}\|_\e\rg0$ as $\e\rg0$. By \eqref{4sec47}, we have
$L_{V(x^i)}(W^i)=E_{m_i}$. Recalling that $E_a$ is strictly increasing for $a>0$, we
obtain $x^i\in \mathcal {M}^i$ and $W^i(\cdot)=U_i(\cdot-z_i)$ for some $U_i\in S_{m_i}$ and $z_i\in \R^N$. Moreover, by \eqref{4sec47} and \re{4sec08}, we have
$$
\lim_{\e\rg0}\int_{\R^N}|\na W_\e^i|^2+V_\e(x+x_\e^i/\e)|W_\e^i|^2=\int_{\R^N}|\na W^i|^2+V(x^i)|W^i|^2.
$$
Then $W_\e^i\rg W^i$ strongly in $H^1(\RN)$. Let $y_\e^i=z_i+x_\e^i/\e$, then $\e y_\e^i\rg x^i\in \mathcal {M}^i$ and $u_{1,\e}^i\rg\vp_\e(\cdot-y_\e^i)U_i(\cdot-y_\e^i)$ in $H^1(\RN)$ as $\e\rg0$. Noting that $supp(u_{1,\e}^i)\subset O_\e^i$, $u_{1,\e}^i\rg\vp_\e(\cdot-y_\e^i)U_i(\cdot-y_\e^i)$ in $H_\e$ as $\e\rg0$, which implies that
$$u_{1,\e}=\sum\limits_{i=1}^ku_{1,\e}^i=\sum\limits_{i=1}^k\varphi_{\e}(x-y_{\e}^i)U_i(x-y_{\e}^i)+o_\e(1)\,\,\mbox{in}\,\, H_\e.$$
Since $\lim_{\e\rightarrow
0}\Gamma_{\e}(u_{2,\e})=0 $, from \eqref{4sec02}, we know $u_{2,\e}\rightarrow 0$ in $H_{\e}$, then the proof is completed.
\end{proof}
\noindent Immediately, as a consequence of Proposition \ref{prop5}, we have
\begin{proposition}\label{prop6}
For sufficiently small $d>0$, there exist constants $\eta>0$ and
$\e_0>0$, such that $|\Gamma_{\e}^{'}(u)|\geq \eta$ for
$u\in \Gamma_{\e}^{D_{\e}}\cap(X_{\e}^d\setminus
X_{\e}^{\frac{d}{2}})$ and $\e\in (0, \e_0)$.
\end{proposition}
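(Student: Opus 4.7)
My plan is a standard contradiction argument that simply unpacks Proposition \ref{prop5}.

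First, I would argue by contradiction: suppose the claim fails. Then for every $n\in\mathbb{N}$, choosing $\eta=1/n$ and $\e_0=1/n$, there exist $\e_n\in(0,1/n)$ and $u_n\in\Gamma_{\e_n}^{D_{\e_n}}\cap(X_{\e_n}^{d}\setminus X_{\e_n}^{d/2})$ with $\|\Gamma_{\e_n}'(u_n)\|<1/n$. In particular, $\e_n\to 0$, $\Gamma_{\e_n}'(u_n)\to 0$ in the dual norm, and since $D_{\e_n}\to E$ by Proposition \ref{prop2.3}(i), we have $\limsup_{n\to\infty}\Gamma_{\e_n}(u_n)\le E$. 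Thus the triple $(\e_n,u_n)$ satisfies the hypotheses of Proposition \ref{prop5}.

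Next, applying Proposition \ref{prop5} (after possibly shrinking $d$), I obtain, up to a subsequence, points $y_n^i\in\RN$, $x^i\in\mathcal{M}^i$, and $U_i\in S_{m_i}$ such that $\e_n y_n^i\to x^i$ and
\[
\Big\|u_n-\sum_{i=1}^k\vp_{\e_n}(\cdot-y_n^i)U_i(\cdot-y_n^i)\Big\|_{\e_n}\longrightarrow 0\quad\text{as }n\to\infty.
\]
Since $x^i\in\mathcal{M}^i\subset(\mathcal{M}^i)^{\beta}$ (recall $\beta>0$), for all sufficiently large $n$ we have $\e_n y_n^i\in(\mathcal{M}^i)^{\beta}$. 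Hence the function $\sum_{i=1}^k\vp_{\e_n}(\cdot-y_n^i)U_i(\cdot-y_n^i)$ belongs to $X_{\e_n}$, and consequently
\[
\mbox{dist}_{\|\cdot\|_{\e_n}}(u_n,X_{\e_n})\longrightarrow 0.
\]
This forces $u_n\in X_{\e_n}^{d/2}$ for all large $n$, contradicting the assumption $u_n\notin X_{\e_n}^{d/2}$. The contradiction proves the proposition.

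The only real content of the argument is the concentration-compactness information supplied by Proposition \ref{prop5}; once that is available, the claim follows formally. There is no genuine obstacle to overcome here, but one minor technical point to be careful about is that the value of $d$ must be chosen small enough so that Proposition \ref{prop5} applies (the same $d$ threshold appearing in its statement), and thereafter the conclusion ``$u_n\in X_{\e_n}^{d/2}$'' is obtained by making $n$ large, not by further shrinking $d$. With this book-keeping, the proof is complete.
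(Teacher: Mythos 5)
Your argument is correct and is exactly the standard contradiction/concentration-compactness argument that the paper has in mind when it writes that Proposition~\ref{prop6} follows ``immediately'' from Proposition~\ref{prop5}; the paper offers no further proof. The only point worth making explicit, which you do handle, is that the sum $\sum_{i=1}^k\vp_{\e_n}(\cdot-y_n^i)U_i(\cdot-y_n^i)$ really does lie in $X_{\e_n}$ because $\e_n y_n^i\to x^i\in\mathcal{M}^i$ places $\e_n y_n^i$ in $(\mathcal{M}^i)^\beta$ for large $n$ and $U_i\in S_{m_i}$.
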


Now, we fix $d>0$ such that Proposition \ref{prop6} holds. Choose $R_0>0$ large enough such that
$O\subset B(0, R_0)$ and $\gamma_{\e}(s)\in H_0^1(B(0,
\frac{R}{\e}))$ for any $s\in T$, $R>R_0$.

\begin{proposition}\label{prop70}
Given $\e>0$ sufficiently small, then there exists a sequence $\{u_n^R\}\subset
X_{\e}^{\frac{d}{2}}\cap \Gamma_{\e}^{D_{\e}}\cap
H_0^1(B(0, \frac{R}{\e}))$, such that
$\lim\limits_{n\rightarrow \infty}\|\Gamma^{'}(u_n^R)\|=0$ in
$H_0^1(B(0, \frac{R}{\e}))$.
\end{proposition}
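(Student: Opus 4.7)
I plan to use a standard localized deformation argument, performed inside the closed Hilbert subspace $H_0^1(B(0,R/\e))\subset H_\e$. Arguing by contradiction, suppose no such Palais--Smale sequence exists in $X_\e^{d/2}\cap\Gamma_\e^{D_\e}\cap H_0^1(B(0,R/\e))$. Then there would be $\eta_R>0$ with $\|\Gamma_\e'(u)\|_{(H_0^1(B(0,R/\e)))^\ast}\ge\eta_R$ throughout that set. Combined with Proposition~\ref{prop6}, which provides a lower bound $\eta>0$ on the annulus $X_\e^d\setminus X_\e^{d/2}$, this yields a uniform lower bound $\tilde\eta:=\min(\eta,\eta_R)>0$ for $\|\Gamma_\e'(u)\|$ on the enlarged set $X_\e^d\cap\Gamma_\e^{D_\e}\cap H_0^1(B(0,R/\e))$.

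Using Proposition~\ref{prop2.3}(i)--(iii), I fix $\alpha_0>0$ small enough that the conclusion of part~(iii) applies (so $\Gamma_\e(\gamma_\e(s))\ge D_\e-\alpha_0$ forces $\gamma_\e(s)\in X_\e^{d/2}$) and $\max_{s\in\partial T}\Gamma_\e(\gamma_\e(s))<D_\e-2\alpha_0$ for all small $\e$; the latter is available because $\tilde E<E$ and $D_\e\to E$. I next construct a locally Lipschitz pseudo-gradient vector field $\mathcal{V}$ for $\Gamma_\e$ on $X_\e^d\cap\Gamma_\e^{D_\e}\cap H_0^1(B(0,R/\e))$, multiply by Lipschitz cut-offs equal to $1$ on $X_\e^{3d/4}\cap\{\Gamma_\e\ge D_\e-\alpha_0\}$ and vanishing outside $X_\e^d\cap\{\Gamma_\e\ge D_\e-2\alpha_0\}$, and extend by zero to a globally Lipschitz field $\widetilde{\mathcal{V}}$ on $H_0^1(B(0,R/\e))$. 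Its flow $\sigma_t$ preserves the subspace by construction.

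Because $R>R_0$, each $\gamma_\e(s)$ already lies in $H_0^1(B(0,R/\e))$, so $s\mapsto\sigma_t(\gamma_\e(s))$ is a continuous deformation of $\gamma_\e$ that coincides with $\gamma_\e$ on $\partial T$ (since $\widetilde{\mathcal{V}}$ vanishes there). The uniform bound $\tilde\eta$ in the ``active'' region forces $\Gamma_\e\circ\sigma_t$ to decrease at a definite rate, so for some $t_0>0$ I will obtain $\max_{s\in T}\Gamma_\e(\sigma_{t_0}(\gamma_\e(s)))<D_\e-\alpha_0/2$. This is to contradict the fact that $D_\e$ is essentially the mountain-pass level: thanks to the strict unique-maximum property of $L_{m_i}(U_{i,t})$ at $t=1$ established via the Pohozaev identity earlier in the section, a Brouwer-degree/intersection argument on the product $T=\prod_{i=1}^k[0,T_i]$ shows that any admissible deformation of $\gamma_\e$ fixing $\partial T$ must still attain some value at least $\sum_i E_{m_i}+o_\e(1)=E+o_\e(1)\ge D_\e-\alpha_0/4$ in the interior of $T$.

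The main technical obstacle is this final topological step: rigorously showing that the deformed product path retains the mountain-pass height $\ge D_\e-\alpha_0/4$. Formalizing it requires converting the intuitive ``mountain pass in each factor'' picture into a degree-theoretic statement, using the unique-maximum property of the one-dimensional energies $L_{m_i}(U_{i,t})$ together with the fact that $\sigma_t$ acts trivially on $\partial T$. All other ingredients are standard, and, crucially, restricting the entire construction to the Hilbert subspace $H_0^1(B(0,R/\e))$ eliminates any compactness-loss or escape-to-infinity issues that would otherwise plague the argument on $H_\e$.
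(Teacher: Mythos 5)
Your overall structure matches the paper's: argue by contradiction, combine the hypothetical lower bound on $\|\Gamma_\e'\|$ inside $X_\e^{d/2}$ with Proposition~\ref{prop6} to get a uniform gradient bound on $X_\e^d\cap\Gamma_\e^{D_\e}\cap H_0^1(B(0,R/\e))$, deform $\gamma_\e$ by a cut-off pseudo-gradient flow fixing $\partial T$ to push the max strictly below $D_\e$, and then use a topological (product-minimax) argument to show the deformed surface still reaches level $E+o_\e(1)$. The paper's final topological step is exactly of this type — it invokes Coti Zelati--Rabinowitz \cite[Proposition 3.4]{CR} rather than a hand-rolled Brouwer-degree argument, but these are interchangeable.

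There is, however, a genuine gap. You cannot apply the product-minimax argument directly to $\Gamma_\e$ along the deformed surface $\gamma(s)$: the minimax levels $C_\e^i$ are defined through the \emph{localized} functionals $\Gamma_\e^i$ acting on paths $\gamma_1^i(s)$ supported near a single well $O^i$, so before any topology can be used you must decompose the deformed path. The paper does this by setting $\gamma_1(s)=\psi_\e\gamma(s)$, $\gamma_2(s)=(1-\psi_\e)\gamma(s)$, further splitting $\gamma_1(s)=\sum_i\gamma_1^i(s)$ with $\operatorname{supp}\gamma_1^i(s)\subset(O^i)_\e^{2\delta}$, and proving the energy approximately \emph{decouples}:
$$
\Gamma_\e(\gamma(s))\ \ge\ \sum_{i=1}^k\Gamma_\e^i(\gamma_1^i(s))+o_\e(1).
$$
This is where the Choquard nonlocality really matters: $\int(I_\al\ast F(u))F(u)$ produces cross-terms $\int(I_\al\ast F(\gamma_1^i))F(\gamma_1^j)$ linking different wells and terms involving $\gamma_2$, and killing them requires the Hardy--Littlewood--Sobolev inequality together with the bound $\int_{\R^N\setminus O_\e}|\gamma(s)|^2\le C\e^\mu$ forced by the uniform boundedness of the penalization $Q_\e$ (estimate \eqref{4sec010}), plus the superadditivity $Q_\e(\gamma(s))\ge Q_\e(\gamma_1(s))+Q_\e(\gamma_2(s))$. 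Your proposal never mentions this decomposition or the control of the nonlocal cross-interaction, and without it the ``Brouwer-degree picture on $T=\prod_i[0,T_i]$'' has nothing to act on: the unique-maximum property of $t\mapsto L_{m_i}(U_{i,t})$ only governs the $i$-th factor \emph{after} the energy has been split into $k$ pieces, each depending on one coordinate $s_i$. That reduction is the heart of the proof and is precisely the part your plan labels ``the main technical obstacle'' and leaves unresolved.
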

\begin{proof}
The proof is similar to \cite{bj}. To the contrary, for
$\e>0$ small enough, there exists $a_R(\e)>0$ such that
$\|\Gamma_{\e}^{'}(u)\|\geq a_R(\e)$ for any $u\in
X_{\e}^{d}\cap\Gamma_{\e}^{D_{\e}}\cap H_0^1(B(0,
\frac{R}{\e}))$. It follows from Proposition \ref{prop2.3} that there exists
$\alpha_0\in (0, E-\tilde{E})$ such that if $\e>0$ small enough and $\Gamma_{\e}(\gamma_{\e}(s))\geq D_{\e}-\alpha_0$,
then $\gamma_{\e}(s)\in X_{\e}^{\frac{d}{2}}\cap H_0^1(B(0, \frac{R}{\e}))$. Thus, by a deformation argument in $H_0^1\left(B(0,
\frac{R}{\e})\right)$, there exist a $\mu_0\in (0,\alpha_0)$
and a path $\gamma\in \left(C[0, T], H_{\e}\right)$ such that
\begin{eqnarray*}
\gamma(s)\left\{
                  \begin{array}{ll}
                    =\gamma_{\e}(s) \ \ & \ \ \text{for} \ \ \gamma_{\e}(s)\in \Gamma_{\e}^{D_{\e}-\alpha_0}\\
                    \in X_{\e}^d \ \ & \ \ \text{for} \ \ \gamma_{\e}(s)\notin \Gamma_{\e}^{D_{\e}-\alpha_0},
                  \end{array}
                \right.
\end{eqnarray*}
 and
\begin{equation}\label{4sec09}
 \Gamma_{\e}(\gamma(s))< D_{\e}-\mu_0,  s\in T.\end{equation}
Let $\psi\in C_0^{\infty}(\R^N)$ be such that $\psi(x)=1$ for $x\in
 O^{\delta}$, $\psi(x)=0$ for $x\notin
 O^{2\delta}$, $\psi(x)\in [0,1]$ and $|\nabla \psi|\leq
 \frac{2}{\delta}$. For $\gamma(s)\in X_{\e}^d$, we define $\gamma_1(s)=\psi_{\e}\gamma(s)$,
 $\gamma_2(s)=(1-\psi_{\e})\gamma(s)$, where
 $\psi_{\e}=\psi(\e x)$, then
{\allowdisplaybreaks
\begin{equation*}\aligned
&\Gamma_{\e}\left(\gamma(s)\right)=\Gamma_{\e}\left(\gamma_1(s)\right)+\Gamma_{\e}\left(\gamma_2(s)\right)+Q_{\e}(\gamma(s))-Q_{\e}(\gamma_1(s))-Q_{\e}(\gamma_2(s))\\
&\,\,\,\,\,\,\,\,\,\,\,\,\,\,\,\,\,\,\,\,\,\,\,\,\,\,\,+\int_{\R^N}\left(\psi_{\e}(1-\psi_{\e})|\na\gamma(s)|^2+V_{\e}\psi_{\e}(1-\psi_{\e})|\gamma(s)|^2\right)+o_\e(1)\\
&\,\,\,\,\,\,\,\,\,\,\,\,\,\,\,\,\,\,\,\,\,\,\,\,\,\,\,-\frac{1}{2}\int_{\RN}(I_\al\ast F(\g(s)))F(\g(s))+\frac{1}{2}\int_{\RN}(I_\al\ast F(\g_1(s)))F(\g_1(s))\\
&\,\,\,\,\,\,\,\,\,\,\,\,\,\,\,\,\,\,\,\,\,\,\,\,\,\,\,+\frac{1}{2}\int_{\RN}(I_\al\ast F(\g_2(s)))F(\g_2(s)).
\endaligned\end{equation*}}%
Notice that
{\allowdisplaybreaks
\begin{equation*}\aligned
Q_{\e}(\gamma(s)) &=
\left(\int_{\R^N}\chi_{\e}|\gamma_1(s)|^2+\int_{\R^N}\chi_{\e}|\gamma_2(s)|^2-1\right)_{+}^2\\
&\geq
\left(\int_{\R^N}\chi_{\e}|\gamma_1(s)|^2-1\right)_{+}^2+\left(\int_{\R^N}\chi_{\e}|\gamma_2(s)|^2-1\right)_{+}^2\\
&=Q_{\e}\left(\gamma_1(s))+Q_{\e}(\gamma_2(s)\right).
\endaligned
\end{equation*}}%
Then we get
{\allowdisplaybreaks
\begin{align}\lab{1111}
&\Gamma_{\e}\left(\gamma(s)\right)\ge\Gamma_{\e}\left(\gamma_1(s)\right)+\Gamma_{\e}\left(\gamma_2(s)\right)+\frac{1}{2}\int_{\RN}(I_\al\ast F(\g_2(s)))F(\g_2(s))\\
&\,\,\,\,\,\,\,\,\,\,\,\,\,\,\,\,\,\,\,\,\,\,\,\,\,\,\,-\frac{1}{2}\int_{\RN}(I_\al\ast F(\g(s)))F(\g(s))+\frac{1}{2}\int_{\RN}(I_\al\ast F(\g_1(s)))F(\g_1(s))+o_\e(1)\nonumber.
\end{align}}%
Since $Q_{\e}(\gamma(s))$ is uniformly bounded with respect to
$\e$, there exists $C>0$ such that
\begin{equation}\label{4sec010}
\int_{\R^N\setminus O_{\e}}|\gamma(s)|^2\leq
C\e^{\mu},\, s\in T.
\end{equation}
Let $H(\g(s)):=F(\g(s))-F(\g_1(s))-F(\g_2(s))$, then
$$
H(\g(s))(x)=0,\,\, \mbox{if}\,\,x\not\in O_\e^{2\dd}\setminus O_\e^\dd.
$$
Then similar to \re{vv}, for any $\dd_0>0$, by virtue of the  Hardy-Littlewood-Sobolev inequality,
\begin{align*}
&\left|\int_{\RN}(I_\al\ast H(\g(s)))F(\g(s))\right|\nonumber\\
&\le C'(N,\al)\left[\int_{O_\e^{2\dd}\setminus O_\e^\dd}(c|\g(s)|^{4N/(N+\al)}+\dd_0^{2N/(N+\al)}|\g(s)|^{2^\ast})\right]^{(N+\al)/(2N)}.
\end{align*}
Noting that $\g(s)$ is uniformly bounded in $H^1(\RN)$ for $\e$ and $s\in T$, thanks to the interpolation inequality and \re{4sec010}, we have
$$
\lim_{\e\rg0}\int_{\RN}[I_\al\ast H(\g(s))]F(\g(s))=0.
$$
Similarly, $i\not=j$ and $i,j=1,2$,
$$
\lim_{\e\rg0}\int_{\RN}[I_\al\ast F(\g_i(s))]H(\g(s))=0,\,\,\lim_{\e\rg0}\int_{\RN}[I_\al\ast F(\g_i(s))]F(\g_j(s))=0,
$$
and
$$
\lim_{\e\rg0}\int_{\RN}[I_\al\ast F(\g_2(s))]F(\g_2(s))=0.
$$
Then
\begin{align*}
\int_{\RN}[I_\al\ast F(\g(s))]F(\g(s))=\int_{\RN}[I_\al\ast F(\g_1(s))]F(\g_1(s))+o_\e(1).
\end{align*}
By \re{1111}, \begin{equation}\label{2}
\Gamma_{\e}(\gamma(s))\geq
\Gamma_{\e}(\gamma_1(s))+o_\e(1).
\end{equation}

For $i=1,2,\cdots, k$, let
\begin{eqnarray*}
\gamma_1^i(s)(x)=\left\{
                  \begin{array}{ll}
                    \gamma_1(s)(x), \ &\text{for} \ x\in (O^i)_{\e}^{2\delta}, \\
                    0, \ &\text{for} \ x\notin (O^i)_{\e}^{2\delta},
                  \end{array}
                \right.
\end{eqnarray*}
then
\begin{equation}\label{4sec011}
\Gamma_{\e}(\gamma_1(s))\geq
\sum\limits_{i=1}^k\Gamma_{\e}(\gamma_1^i(s))=\sum\limits_{i=1}^k\Gamma_{\e}^i(\gamma_1^i(s)).
\end{equation}
Since $0<\alpha_0<E-\tilde{E}$, by Proposition
\ref{prop2.3}, for all $i\in \{1,2,\cdots,k\}$,
$\gamma_1^i(s)\in \Phi_{\e}^i$. Thus, thanks to
\cite[Proposition 3.4]{CR} and \eqref{4sec011}, we
deduce that
$$\max\limits_{s\in T}\Gamma_{\e}(\gamma(s))\geq E+o_\e(1).
$$
Combining with \eqref{4sec09}, we get $E\leq D_{\e}-\mu_0$, which is a contradiction.
\ep
\begin{proposition}\label{prop7}
Given $\e,d>0$ sufficiently small, $\Gamma_{\e}$ has a nontrivial critical point $u\in X_{\e}^d\cap \Gamma_{\e}^{D_{\e}}$.
\end{proposition}
\bp
Let $\left\{u_n^R\right\}$ be a Palais-Smale sequence of $\Gamma_{\e}$ obtained above, then due to $u_n^R\in X_\e^d\cap\Gamma_{\e}^{D_{\e}}$, $\left\{u_n^R\right\}$ is uniformly bounded in $H_0^1\left(B(0,\frac{R}{\e})\right)$ for $n$. Up to a subsequence, as $n\rg\iy$, $u_n^R\rightarrow u_\e^R$ strongly in
$H^1_0\left(B(0, \frac{R}{\e})\right)$ and $u_\e^R$ is a critical
point of $\Gamma_{\e}$ on $H^1_0\left(B(0,
\frac{R}{\e})\right)$ and satisfies
\begin{equation}\label{4sec001}
-\DD u_\e^R+\ti{V}_{\e}u_\e^R=[I_\al\ast F(u_\e^R)]f(u_\e^R),\,\,|x|\le R/\e,
\end{equation}
where $$\ti{V}_\e=V_\e+4\left(\int_{\R^N}\chi_{\e}|u^R|^2\,\ud x-1\right)_+\chi_{\e}.$$ Since $f(t)=0$ for $t<0$, one knows $u_\e^R\ge0$ in $B\left(0, R/\e\right)$. We extend $u_\e^R\in H_0^1(B(0,R/\e))$ to $u_\e^R\in H_\e$ by zero outside $B(0,R/\e)$. Noting that $u_\e^R\in X_\e^R$, $\{u_\e^R\}$ is uniformly bounded in $H_\e$ for $R,\e$. By repeating the argument in \cite[Proposition 3.1]{MV1}, for any $p\in[2,\frac{N}{\al}\frac{2N}{N-2})$, there exists $C_p$ (independent of $\e,R$) such that$\|u_\e^R\|_p\le C_p\|u_\e^R\|_2.$ Then similar as in Proposition \ref{decay}, we know $I_\al\ast F(u_\e^R)$ is uniformly bounded in $L^\iy(\RN)$ for $\e,R$. So there exists $C$(independent of $\e,R$) such that
$$
-\DD u_\e^R+u_\e^R\le Cf(u_\e^R),\,\,|x|\le R/\e.
$$
Thanks to $(N+\al)/(N-2)<2^\ast$, it follows from the standard the Moser iteration \cite{GT} that $\{u_\e^R\}$ is uniformly bounded in $L^\iy(\RN)$ for $\e,R$. On the other hand, by $u_n^R\in X_\e^d\cap\G_\e^{D_\e}$, there exists $C>0$(independent of $\e,n,R$) such that $$\int_{\R^N}\chi_{\e}|u_n^R|^2\,\ud x\le C,\,\,n\in\mathbb{N}.$$ By Fatou' Lemma, $\int_{\R^N\setminus O_\e}|u_\e^R|^2\,\ud x\le C\e^\mu$ for all $\e,R$. Then it follows from \cite[Theorem 8.17]{GT} and the comparison principle that there exist $c,C>0$(independent of $\e,R$) such that
\be\lab{decay11}
u_\e^R(x)\le C\exp{(-c|x|)},\,\,x\in\RN,
\ee
which yields that, up to a subsequence, $u_\e^R\rg u_\e$ strongly in $L^p(\RN)$ as $R\rg\infty$ for any $p\in[2,2^\ast)$. Thus, $u_\e\in X_\e^d\cap\G_\e^{D_\e}$ is a nontrivial critical point of $\G_\e$. Obviously, $0\not\in X_\e^d$ if $d>0$ small enough. So $u_\e\not\equiv0$ if $d>0$ small.
\end{proof}

\subsection{Completion of the proof for Theorem \ref{Theorem 1}}
\bp
\noindent By Proposition \ref{prop7}, there exist $d>0$ and $\e_0>0$, such that $\G_\e$ has a nontrivial critical point $u_\e\in X_\e^d\cap\G_\e^{D_\e}$ for $\e\in(0,\e_0)$. Since $u_\e^R\rg u_\e$ strongly in $L^2(\RN)$ as $R\rg\infty$ and \re{decay11}, there exists $C>0$ such that $\sup_{\e\in(0,\e_0)}\|u_\e\|_\iy\le C$. By $(F1)$ and $u_\e\not\equiv0$,
\be\lab{rr1}
\inf_{\e\in(0,\e_0)}\|u_\e\|_\iy\ge\rho.
\ee
Since $f(t)=0$ for $t\le 0$, we see that $u_\e\ge 0$. By \re{rr1} an the weak Harnark
inequality (see \cite{GT}), $u_\e>0$ in $\RN$. By Proposition \ref{prop5}, there exist
$\{y_\e^i\}_{i=1}^k\subset\RN, x^i\in \mathcal{M}^i, U_i\in S_{m_i}$ such that for any $1\le i\le k$,
$$
\lim_{\e\rg 0}|\e y_\e^i-x^i|=0\ \mbox{and}\ \lim_{\e\rg 0}\|u_\e-\sum_{i=1}^kU_i(\cdot-y_\e^i)\|_\e=0.
$$
Let $w_\e^i(y)=u_\e(y+y_\e^i)$, then $\lim_{\e\rg 0}\|w_\e^i-U_i\|_2=0$, which implies that for any $\sigma>0$, there exists $R>0$ (independent of $\e,i$)
such that
$$
\sup_{\e\in(0,\e_0)}\int_{\RN\setminus B(0,R)}(w_\e^i)^2\le\sigma.
$$
Similar as above, $I_\al\ast F(u_\e)$ is uniformly bounded in $L^\iy(\RN)$ for $\e$. Then it follows from \cite[Theorem 8.17]{GT} and the comparison principle that for each $1\le i\le k$, there exist $M>0$ (independent of $\e,i$) and
$y_\e^i\in\RN$, such that
$$0<w_\e^i(y)\le
M\exp\left(-\frac{|y|}{2}\right)\ \mbox{for}\ y\in\RN,
\e\in(0,\e_0).
$$
Then
\be\lab{rr2}
0<u_\e(y)\le M\exp\left(-\frac{1}{2}\min_{1\le i\le k}|y-y_\e^i|\right)\ \mbox{for}\ y\in\RN,
\e\in(0,\e_0),
\ee
which yields that $Q_\e(u_\e)=0$ for small $\e>0$. Therefore, $u_\e$ is a critical point of $P_\e$. This completes the proof.
\ep

\end{document}